\def\es{{e^\star}}
\def\cond{\ \Big| \ }
\renewcommand{\leq}{\leqslant}
\renewcommand{\geq}{\geqslant}
\def\m{\mathbf{m}}
\newcommand{%
    
    \import{./fig/}{.pdf_tex}
}[1]{%
    
    \import{./fig/}{#1.pdf_tex}
}
\definecolor{CustomBlue}{RGB}{23, 86, 118}
\newcommand{\sse}{\subseteq}
\newcommand{\N}{\mathbb{N}}
\newcommand{\Z}{\mathbb{Z}}
\newcommand{\eps}{\varepsilon}
\renewcommand{\t}{\theta}
\newcommand{\sig}{\sigma}
\newcommand{\al}{\alpha}
\newcommand{\be}{\beta}
\newcommand{\Nor}[1]{\mathcal{N}\left(#1\right)}
\newcommand{\Exp}[1]{\exp\left(#1\right)}
\newcommand{\Abs}[1]{\left|#1\right|}
\renewcommand{\Box}[1]{\left[#1\right]}
\newcommand{\Rnd}[1]{\left(#1\right)}
\renewcommand{\bar}[1]{\overline{#1}}
\newcommand{\argmax}{\text{argmax}}
\newcommand{\E}[1][\@nil]{%
	\def\tmp{#1}%
	\ifx\tmp\@nnil
		\mathbb{E}
	\else
		\mathbb{E}\left[#1\right]
\fi}
\renewcommand{\P}[1][\@nil]{%
	\def\tmp{#1}%
	\ifx\tmp\@nnil
		\mathbb{P}
	\else
		\mathbb{P}\Rnd{#1}
\fi}
\newcommand{\Cov}{\mathrm{Cov}}
\newcommand{\Var}{\mathrm{Var}}
\renewcommand{\hat}[1]{\widehat{#1}}
\newcommand{\norm}[1]{\left\lVert#1\right\rVert}
\newcommand{\mnorm}[1]{{\left\vert\kern-0.25ex\left\vert\kern-0.25ex\left\vert #1 
\right\vert\kern-0.25ex\right\vert\kern-0.25ex\right\vert}}
\newcommand{\cG}{\mathcal{G}}
\newcommand{\f}[2]{\frac{#1}{#2}}
\newcommand{\F}[2]{{\left(\frac{#1}{#2}\right)}}
\newcommand{\x}{\bar{x}}
\newcommand{\y}{\bar{y}}
\newtheorem{theorem}{Theorem}[section]
\newtheorem{lemma}[theorem]{Lemma}
\newtheorem{proposition}[theorem]{Proposition}
\newtheorem{definition}[theorem]{Definition}
\theoremstyle{definition}
\newtheorem{remark}[theorem]{Remark}
\numberwithin{theorem}{section}
\numberwithin{equation}{section}
\numberwithin{figure}{section}
\newcommand{\cL}{\ensuremath{\mathcal L}}
\def\({\left(}
\def\){\right)}
\newcommand{\e}{\varepsilon}
\renewcommand{\k}{\mathbf{k}}
\let\temp\phi
\let\phi\varphi
\let\varphi\temp
\begin{document}
\title[Zero temperature Edwards-Anderson]{Spectral properties of the zero temperature Edwards-Anderson model}

 \author{Mriganka Basu Roy Chowdhury, Shirshendu Ganguly }
 \address{Mriganka Basu Roy Chowdhury\\ University of California, Berkeley}
 \email{mriganka\_brc@berkeley.edu}
 \address{Shirshendu Ganguly\\ University of California, Berkeley}
 \email{sganguly@berkeley.edu}

\begin{abstract} An Ising model with random couplings on a graph is a model of a spin glass. While the mean field case of the Sherrington-Kirkpatrick model is very well studied, the more realistic lattice setting, known as the Edwards-Anderson (EA) model, has witnessed rather limited progress. In \cite{chatterjee2023spin} chaotic properties of the ground state in the EA model were established via the study of the Fourier spectrum of the two-point spin correlation. A natural direction of research concerns fractal properties of the Fourier spectrum in analogy with critical percolation. In particular, numerical findings \cite{bm2} seem to support the belief that the fractal dimension of the associated spectral sample drawn according to the Fourier spectrum is strictly bigger than one. Towards this, in this note we introduce a percolation-type argument, relying on the construction of ``barriers'', to obtain new probabilistic lower bounds on the size of the spectral sample. 
\end{abstract}
\maketitle{}

\setcounter{tocdepth}{1}
\tableofcontents

\parindent=0pt
\parskip=5pt

\section{Introduction}
Consider the following spin model on a general connected graph $G = (V, E)$. 
Each edge
$e \in E$ is assigned a standard Gaussian variable $J_e \sim \Nor{0, 1}$ in an i.i.d. fashion with these variables
referred to as \emph{couplings}, and given the disorder, consider the (random) Hamiltonian on \emph{spin configurations} 
$\sig \in \{-1, +1\}^V$ given by
\begin{align}\label{hamil23}
	H(\sig) = \sum_{e  = \{u, v\}\in E} J_e \sig_{u} \sig_{v}.
\end{align}
The object of interest is then the probability measure on spin configurations given by $\mu_\be(\sig) = \f{1}{Z}
\Exp{\be H(\sig)}$, where $Z$ is the (random) partition function and $\be$ is the inverse temperature. 
When the couplings $J_e$ are taken to be a constant, say $1$, then this corresponds to the well known Ising model. Thus the above Gibbs measure is a variant of the Ising model with random, mean-zero couplings, expected to exhibit glassy-behavior and thus serves as a toy model for spin-glasses. 
The popular
Sherrington-Kirkpatrick (SK) model, see e.g., \cite{sk,pan}, is a special case of the above, wherein the underlying graph is the complete graph.

Note that the usual convention is to take the Gibbs measure to be the exponential of the \emph{negative} of the 
Hamiltonian to ensure that the Gibbs measure seeks out \emph{low-$H$ states} (low
energy), but we will work with the current formulation without the negative sign for expository reasons. However, in our setting, since the Gaussians are symmetric, this can be done without loss of generality.

While the SK model has been studied in great detail, the more realistic lattice setting (known primarily as the Edwards-Anderson (EA) model introduced in \cite{ea}) is significantly more challenging. Most of the questions for the EA model remain open notwithstanding a significant amount of non-rigorous literature devoted to predictions, often without consensus, of possible behaviors. See e.g., \cite{bm1}. A more mathematical treatment may be found in \cite{ns}.  The EA model will be the focus of  this note.  The primary aim is to introduce a percolation-type ``barrier'' argument to understand the Fourier spectrum of the two-point function associated to \emph{ground states}, i.e., configurations $\sig$ that maximize
$H(\sig)$. The Fourier basis in the setting where the disorder is given by i.i.d. Rademacher variables is in bijection with the subsets of edges, and the Gaussian case, which is the setting of the paper, is, albeit more complicated, not very different. While the results presented in the paper are obtained by initial applications of the idea, we hope that it has further purchase and can lead to new bounds in the study of lattice spin glass models where mathematical progress has been few and far between. 

With the above background, let us now present the setup of our results. 

We will use $V = \{(i, j) \in \Z^2: 1 \leq i, j \leq n\}$ to denote the vertex set of the  $n \times n$ subgrid of $\Z^2$ endowed with  nearest neighbor
edges (our arguments are not planar in nature and will hold in any dimension as will be remarked later but for concreteness we fix the dimension to be $2$ for the moment). It is easily seen that the ground state $\sig = \argmax_{\sig} H(\sig)$ is unique with probability 1, on account
of the continuity of Gaussians, up to a global sign flip (that is, $\pm \sig$ are both ground states). Thus, to understand correlations, one must consider the relative spin for two vertices $u, v \in V$, i.e., the product $\sig_u \sig_v$.

Observe that $\sig_u \sig_v$ is simply a function of the couplings $\{J_e\}$.  The particular perspective we will explore is a dynamic one through the lens of noise-sensitivity. That is, how sensitively does the relative spin between two vertices react to perturbations of the underlying disorder. This is the subject of disorder chaos which was introduced in the context of spin glasses in \cite{fh,bm2}. While for the mean field SK model, there has been impressive progress \cite{sc,chachaos, auf,chen1,chen2,chen3,eldan}, a rigorous study of this in the EA setting was initiated in \cite{chatterjee2023spin}. Related results may also be found in \cite{arguin,ans}. While there are many natural perturbation models, a particularly canonical one when the underlying disorder variables are i.i.d. standard Gaussians is to evolve each of them along  independent stationary Ornstein-Uhlenbeck (OU) flows. 
The OU process admits the following well known SDE representation: 
\begin{align*}
d X_t=-X_tdt+\sqrt{2}dB_t. 
\end{align*}
We will evolve each coupling $J_e$ independently according to 
an OU $J_e^t$ ($t \geq 0$) such that $J_e^0 = J_e$. 
In particular, for any fixed $t>0$ the following representation holds: $J_e^t=e^{-t}J_e^0+\sqrt{1-e^{-2t}}J'_e$ where $J_e'$ is an independent standard Gaussian.  Let the ground state at time $t$ under the OU flow be denoted by $\sigma^t$.

Finally, as is common in investigations of noise-sensitivity properties, a rather fruitful technique is to consider the
\emph{Fourier expansion} of $\sig_u\sig_v$, as a function of the couplings, which we now turn to. Relaxation properties of the relative spin can then be used in the study of chaotic properties of the ground state through its site overlap, i.e., $R(t)=\frac{1}{|V|}\langle\sigma^0, \sigma^{t}\rangle.$  Since the right hand side is well defined only up to a sign, it is natural to consider $R(t)^2 \in [0,1],$ which can be expanded into a sum of relative spin terms as in \eqref{expan1} below. In this formulation, it is of interest to find the smallest scale of $t$ at which $R(t)$ becomes $o(1),$ i.e. when $\sigma^0$ and $\sigma^t$ are nearly orthogonal to each
other.

Since our couplings are Gaussian, the appropriate Fourier basis is provided by the \emph{Hermite polynomials}. For
one Gaussian variable, say $\xi$, the Hermite polynomials are one-variable polynomials $h_k(x)$ of degree $k$ ($k \geq
0$), satisfying the orthonormality relations
\begin{align*}
	&\E h_k(\xi)h_m(\xi) = 0, \quad k \neq m, \\
	&\E h_k^2(\xi) = 1, \quad k \geq 0.
\end{align*}
The first few Hermite polynomials are given by $h_0(x) = 1, h_1(x) = x, h_2(x) = 2^{-1/2}\Rnd{x^2 - 1}, h_3(x)
= 6^{-1/2}\Rnd{x^3 - 3x}$. One may decompose any ($L^2$) function $f$ of one Gaussian variable $\xi$ as 
\begin{align*}
	f(\xi) = \sum_{k=0}^\infty \al_k h_k(\xi), \quad \al_k = \E\Box{f(\xi) h_k(\xi)}.
\end{align*}

In our setup, we require an analogous expansion for functions of \emph{multiple} Gaussians $\{J_e\}$, which may be
simply obtained by taking tensor products of Hermite polynomials. For each \emph{multi-index} $\k = (k_e)_{e
\in E} \in \N^E$ (we will use the notation $\N = \{0, 1, 2, \ldots\}$), define the corresponding basis function
\begin{align*}
	h_\k(\{J_e\}) = \prod_{e \in E} h_{k_e}(J_e).
\end{align*}
Given the above, we may Fourier-expand $\sig_u\sig_v$ as follows:
\begin{align}\label{fourierexp}
	\sig_u \sig_v = \sum_{\k \in \N^E} \al_\k h_\k(\{J_e\}), \quad \text{where}\quad \al_\k = \E\Box{\sig_u \sig_v h_\k(\{J_e\})}.
\end{align}

That $\alpha_0=\E\Box{\sig_u^0 \sig_v^0}=0$ follows from the fact that the annealed distribution (averaged over the Gaussians) of $\sigma$ is uniform on $\{-1,1\}^V$ up to a global sign flip. One way to see this is via the observation that for any spin vector $\tau\in\{-1,1\}^V,$ the mapping $J_{(u,v)} \to J^\tau_{(u,v)}\coloneq \tau_{u} J_{(u,v)} \tau_v$ is a measure preserving bijection between the Gaussian variables $\{J_{e}\}_e$ and $\{J^\tau_{e}\}_e.$ Now this map keeps the quadratic forms $H(\sigma)$ from \eqref{hamil23} invariant. Namely, $H(\sigma)=H^\tau(\sigma^{\tau})$ where $\sigma^{\tau}=\tau \sigma$ with the latter being the pointwise product and $H^{\tau}$ is the quadratic form where $\{J_{e}\}_e$ in \eqref{hamil23}  is replaced by $\{J^\tau_{e}\}_e$ (this is because, owing to the fact that $\tau_w^2=1$ for any $w,$ for each edge $e=(u,v)$, $\sigma_u J_{e}\sigma_{v}=\sigma_u\tau_{u} \tau_u J_{e}\tau_v \tau_v\sigma_{v}=\sigma^{\tau}_u J^\tau_e\sigma^\tau_v$).  Thus the ground state corresponding $\{J^{\tau}_{e}\}_e$ is obtained by multiplying pointwise by $\tau$ the ground state corresponding to $\{J_{e}\}_e.$ This proves that the annealed ground state distribution is invariant under pointwise multiplication by any $\tau \in \{-1,1\}^V$ and hence must be uniform.

Recalling that ground state at time $t$ under the OU flow is denoted by $\sigma^t$, using \eqref{fourierexp},
we now arrive at the key observable, the
covariance between $\sig_u^t \sig_v^t$ and $\sig_u^0 \sig_v^0$:
\begin{align}\label{expan1}
	\Cov(\sig_u^t \sig_v^t, \sig_u^0 \sig_v^0) &= \E \Box{\sig_u^t \sig_v^t \sig_u^0 \sig_v^0} -
	\underbrace{\E\Box{\sig_u^t \sig_v^t} \E\Box{\sig_u^0 \sig_v^0}}_{= 0} \\
											   &= \sum_{\k, \m \in \N^E} \al_\k \al_\m \cdot \E[h_\k(\{J_e^t\}) h_\m(\{J_e^0\})] \\
											   &= \sum_{\k, \m \in \N^E} \al_\k \al_\m \cdot \prod_{e \in E} \E[h_{k_e}(J_e^t) h_{m_e}(J_e^0)].
\end{align}
This is the moment where the usefulness of the Fourier approach becomes apparent. 
It is a fact that for any two Hermite polynomials $h_k, h_m$, the covariance $\E[h_k(J_e^0) h_m(J_e^t)]$ is zero if $k
\neq m$, and is $e^{-kt}$ if $k = m$. {This follows from the fact that the Hermite polynomials $h_k$ are
eigenfunctions of eigenvalues $-k$ of the generator of the OU process, see for instance \cite{sc}}. Consequently, 
\begin{align*}
	\Cov(\sig_u^t \sig_v^t, \sig_u^0 \sig_v^0) &= \sum_{\k \in \N^E} \al_\k^2 \prod_{e \in E} e^{-k_e t} = \sum_{\k \in
	\N^E} \al_\k^2 e^{-\Abs{\k} t},
\end{align*}
where we define $\Abs{\k} = \sum_{e \in E} k_e$, to be referred to as the \emph{weight} of $\k$. Therefore, if the Fourier expansion of
$\sig_u\sig_v$ is mostly supported on large weight terms, the decorrelation will be faster. Note that since
$\sig_u\sig_v = \pm 1$, the result above for $t = 0$ is simply the Parseval's Fourier isometry result, i.e.,
\begin{align*}
	1 = \Var(\sig_u\sig_v) = \sum_{\k \in \N^E} \al_\k^2,
\end{align*}
The above allows an interpretation of the Fourier coefficients as a probability mass function.

\begin{definition}\label{specssample} The \emph{spectral (probability) measure} $\mu$ on $\N^E$, a sample from which is termed as the spectral sample, is given by \begin{align}
	\label{eq:mudef}
	\mu(\k) =  \al_\k^2.
\end{align}
In this article, instead of integer vectors, we will deal with the more geometric object, their supports. For any such $\k,$ let $E_\k = \{e \in E: k_e \geq 1\}$ be the \emph{support} of $\k$.
With the above definition, for any $S \sse E,$ define $$\mu(S)=\sum_{\k: E_\k = S}\mu(\k).$$
\end{definition}

In the above terms, note that 
\begin{align*}
	\sum_{\k \in \N^E} \al_\k^2 e^{-\Abs{\k} t} = \E_{\k \sim \mu} \Box{e^{-\Abs{\k} t}},
\end{align*}
Thus a large size of the typical spectral sample implies faster de-correlation properties.

A seminal success story in the study of noise sensitivity is the investigation of spectral properties of crossing events in critical percolation carried out in \cite{bks, gps}. A particularly important outcome was a rather detailed understanding of the corresponding spectral sample
and its fractal properties. 
Drawing a parallel, one may speculate that even in the EA model, the spectral sample continues to exhibit fractal behavior. One particular manifestation of it could be its size.

Given the above preparation and background, we are now in a position to state our main results.

\section{Main results}
The starting point of our results is the following recent result established in \cite{chatterjee2023spin} relying on an  observation regarding the symmetry of the problem, which we will review
shortly. 
\begin{proposition}\cite[Lemma 1]{chatterjee2023spin}
	\label{thm:chatterjee}
	 If $E_\k$ does not connect $x$ and $y$, then
	$\al_\k = 0$.
\end{proposition}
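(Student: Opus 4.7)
The plan is to exploit the same spin-flip symmetry of the couplings that was reviewed just above the proposition, but now localized to a vertex cut separating $x$ from $y$. Since $E_\k$ does not connect $x$ and $y$, the connected components of the graph $(V, E_\k)$ containing $x$ and $y$ are distinct, and in particular we can pick a subset $S \subseteq V$ with $x \in S$, $y \notin S$, and such that no edge of $E_\k$ crosses the edge-boundary $\partial S$. A canonical choice is to let $S$ be the union of all connected components of $(V, E_\k)$ that contain $x$ (and more generally any graph-theoretic cut between $x$ and $y$ in the subgraph $(V, E_\k)$ works).

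Given this $S$, define $\tau \in \{-1,+1\}^V$ by $\tau_v = -1$ for $v \in S$ and $\tau_v = +1$ otherwise, and apply the measure-preserving bijection $J_e \mapsto J^\tau_e \coloneq \tau_u J_e \tau_v$ (for $e = \{u,v\}$) that was used in the proof of $\alpha_0 = 0$ earlier in the excerpt. The key observation is that for $e \in E_\k$, both endpoints lie on the same side of the cut, so $\tau_u \tau_v = +1$ and $J_e^\tau = J_e$; consequently $h_\k(\{J_e^\tau\}) = h_\k(\{J_e\})$, because only the coordinates $e \in E_\k$ affect $h_\k$ (the others contribute factors of $h_0 = 1$).

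On the other hand, the same argument reviewed in the excerpt shows that $H^\tau(\sigma) = H(\tau\sigma)$, so the ground state corresponding to $\{J^\tau_e\}$ is $\tau \sigma$ where $\sigma$ is the ground state for $\{J_e\}$. Thus $\sigma^\tau_x \sigma^\tau_y = \tau_x \tau_y \sigma_x \sigma_y = (-1)(+1)\sigma_x \sigma_y = -\sigma_x \sigma_y$. Combining these two observations with the fact that $\{J^\tau_e\}$ has the same joint law as $\{J_e\}$, we obtain
\begin{align*}
\al_\k = \E\Box{\sig_x \sig_y\, h_\k(\{J_e\})} = \E\Box{\sig^\tau_x \sig^\tau_y\, h_\k(\{J^\tau_e\})} = \E\Box{-\sig_x \sig_y\, h_\k(\{J_e\})} = -\al_\k,
\end{align*}
which forces $\al_\k = 0$.

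The only thing to be careful about is the global sign-flip ambiguity in the ground state: strictly speaking $\sigma$ is defined only up to $\pm 1$, but $\sig_x \sig_y$ is unambiguously defined, and the identity $H^\tau(\sigma) = H(\tau\sigma)$ pairs the two sign choices consistently, so the computation above is unaffected. The whole argument is planar-free and dimension-free; the only graph-theoretic input is the existence of the separating set $S$, which is immediate from the non-connectivity assumption.
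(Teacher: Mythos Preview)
Your proof is correct. It uses the same underlying sign-flip symmetry as the paper, but packages it more directly: you apply the measure-preserving map $J\mapsto J^\tau$ (already introduced in the paper to show $\alpha_0=0$) to a single multi-index $\k$ and obtain $\al_\k=-\al_\k$ in one line. The paper instead passes through the conditional expectation $\E[\sig_x\sig_y\mid J_S]$, derives the identity $\E\big[\E[\sig_x\sig_y\mid J_S]^2\big]=\sum_{\k\sse S}\al_\k^2$, and then shows the left-hand side vanishes via an edge-cutset flip (which is exactly your $\tau$-flip in disguise, with the cutset $C=\partial S$). Your route is cleaner for the proposition in isolation; the paper's detour has the payoff that the conditional-expectation identity \eqref{eq:sub} is precisely the tool used again in the proofs of Theorems~\ref{thm:main} and~\ref{thm:mainunion}, so it is worth being aware that the longer argument is not wasted effort.
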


Thus, the spectral sample, with probability one, connects $x$ and $y$, rendering the analogy with percolation more apparent. In particular, the spectral sample is guaranteed to have size at least $d=d(x,y)$ where the latter is the $L^1$ distance between $x,y.$ The general question of a geometric flavor we want to answer is whether there exists $\kappa>0$ such that the typical size of the spectral sample is of the order of $d^{1+\kappa}.$ As already alluded to, numerical findings in \cite{bm2} seem to support such a statement.

We are far from answering this question at the moment; nonetheless, we present the following results as some initial progress in this direction. While Proposition \ref{thm:chatterjee} states that a hard connectivity constraint is necessary to have non-zero spectral mass, our statements are more probabilistically quantitative in nature.

We first state the most vanilla form of the result followed by a stronger form. 

Let  $u = (1, n/2), v = (n, n/2)$ where we assume that $n$ is even. Our first result says that the line $\cL$ connecting $u$ and $v$ has exponentially small mass under the spectral measure $\mu$ defined in Definition \ref{specssample}.

\begin{theorem}
\label{thm:main} There exists $c>0$ such that the following holds. 
	Consider the $n \times n$ grid $V$ with nearest neighbor connections, and consider the vertices $u = (1, n/2), v = (n,
	n/2)$, such that $\norm{u - v} = n-1$ (here $\norm{\cdot}$ is the Manhattan or $L^1$ distance). Consider
	the horizontal line $\cL = \{\big((k, n/2),(k+1, n/2)\big) : 1 \leq k < n\}$ connecting $u$ and $v$ (this is a collection of edges rather than vertices, since spectral samples are subsets of edges). Then
\[
\mu(\cL) \leq \exp(-cn),
\]
where $\mu$ is the spectral measure for $\sig_u\sig_v$ defined in \eqref{eq:mudef}.
\end{theorem}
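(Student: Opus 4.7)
The first step is to reformulate $\mu(\cL)$ as a conditional second moment. For any multi-index $\k$ with $E_\k \subseteq \cL$, the basis function $h_\k$ depends only on $J_\cL := (J_e)_{e \in \cL}$, so $\alpha_\k = \E[\sig_u \sig_v \, h_\k(J_\cL)]$ equals the multivariate Hermite coefficient of $f(j_\cL) := \E[\sig_u \sig_v \mid J_\cL = j_\cL]$, viewed as a function on $\R^\cL$. Parseval applied to $f$ gives $\sum_{\k:\,E_\k \subseteq \cL}\alpha_\k^2 = \E[f(J_\cL)^2]$, and Proposition \ref{thm:chatterjee} eliminates the terms with $E_\k \subsetneq \cL$ since any strict subset of the path $\cL$ disconnects $u$ from $v$. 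Thus $\mu(\cL) = \E[f(J_\cL)^2]$, and the task reduces to showing $\E[f^2] \leq e^{-cn}$. A refinement of the symmetry argument underlying Proposition \ref{thm:chatterjee} shows moreover that $f$ is \emph{odd} in every coordinate $J_e$, $e \in \cL$, so the Hermite expansion of $f$ is supported on multi-indices with all odd entries (and therefore with weight at least $n-1$).

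To obtain the exponential bound, I plan to construct $m = \Theta(n)$ disjoint cuts (``barriers'') $B_1, \ldots, B_m$ in $E$, each separating $u$ from $v$ and intersecting $\cL$ in exactly one distinct edge $e_i$. For each $i$, the sign-flip $T_{B_i}$ negating $(J_e)_{e \in B_i}$ is measure-preserving, induces the spin flip $\sig \mapsto \phi_{S_i}\sig$ on the side $S_i$ of $B_i$ containing $u$, and hence negates $\sig_u\sig_v$ while perturbing $J_\cL$ only at the coordinate $e_i$. Combining these barrier symmetries with the telescoping identity $\sig_u\sig_v = \prod_{e \in P}\sig_{x_e}\sig_{y_e}$ along a ``bypass'' path $P$ of length $\Theta(n)$ disjoint from $\cL$ (which holds because $\sig_w^2 = 1$ for every intermediate vertex $w$), one rewrites $f(j_\cL) = \E[\prod_{e \in P}\sig_{x_e}\sig_{y_e} \mid J_\cL = j_\cL]$ as the conditional expectation of a product of $\Theta(n)$ spin products supported off $\cL$. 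The aim is then to show that, after additionally conditioning on the couplings outside suitable neighborhoods of the edges of $P$, each telescoping factor has conditional mean bounded away from $\pm 1$, so that the product decays as $(1 - \epsilon)^{\Theta(n)}$ on a typical event for $J_\cL$.

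The main obstacle is the decoupling step. A single barrier, or equivalently a single telescoping factor in isolation, only reproduces the known oddness of $f$ in one coordinate and delivers no quantitative improvement; extracting $\Omega(n)$ worth of cancellation requires genuine approximate conditional independence of the $\Theta(n)$ factors, which is delicate because the ground state is a global optimizer that couples all edges. The intended route is to leverage the barriers to partition the grid into disjoint ``regions of influence'' around distinct edges of $P$, arranged so that local perturbations inside one region have negligible effect on spin products at distant edges, and then to control the residual correlations via Gaussian fluctuation estimates, uniformly on a typical event for $J_\cL$ of probability $1 - e^{-\Omega(n)}$. Making this decoupling quantitatively precise, while preserving the parity and measure-preserving structure inherited from the barriers, is the central technical challenge of the argument.
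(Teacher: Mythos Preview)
Your reformulation $\mu(\cL)=\E[f(J_\cL)^2]$ with $f(j_\cL)=\E[\sig_u\sig_v\mid J_\cL=j_\cL]$ is correct and matches the paper. The observation that $f$ is odd in every coordinate is also correct. But from that point on your plan diverges from the paper and, as you yourself acknowledge, runs into a genuine obstacle.

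Your ``barriers'' are purely geometric cutsets $B_i$, and the associated sign-flip symmetries $T_{B_i}$ only reproduce the oddness of $f$ in each coordinate $e_i$. You then propose to extract exponential decay by telescoping along a bypass path and arguing approximate conditional independence of $\Theta(n)$ spin-product factors. This decoupling step is exactly as hard as the original problem: the ground state is a global optimizer, and you have no mechanism that forces $\E[\sig_{x_e}\sig_{y_e}\mid\cdots]$ to be bounded away from $\pm1$, let alone approximately independently across $\Theta(n)$ edges. Nothing in your outline indicates how to make this quantitative, and it is not clear that it can be done along these lines.

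The paper's argument sidesteps the decoupling issue completely with a different notion of barrier. There, a barrier around an edge $e\in\cL$ is not a cutset but a \emph{local event} on the couplings: a small box around $e$ in which certain boundary edges have $|J|\geq 100$ and the remaining nearby edges have $|J|\leq 1$. On this event, the ground-state spins on the box boundary are forced (by a direct energy comparison) regardless of $J_e$, so by domain Markov $\sig_u\sig_v$ is independent of the sign of $J_e$. Crucially, this event is measurable with respect to $|J|$ and has probability bounded below by a constant. Since $\Theta(n)$ such events, supported on disjoint edge sets, are available along $\cL$, at least one occurs except with probability $e^{-cn}$. On that good event, conditioning further on $|J|$, one can flip the signs of a vertical cutset through $e$ \emph{except at $e$ itself}; since $\sig_u\sig_v$ does not feel the sign of $J_e$ anyway, this still negates $\sig_u\sig_v$, forcing $\E[\sig_u\sig_v\mid J_\cL,|J|]=0$. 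Thus $|f(J_\cL)|\leq e^{-cn}$ with probability $1-e^{-cn}$, and $\E[f^2]\leq e^{-cn}$ follows.

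The two key ideas you are missing are: (i) condition additionally on the absolute values $|J|$ of \emph{all} couplings, which preserves an i.i.d.\ family of random signs off $\cL$ to flip; and (ii) design the barrier as a constant-probability local event that decouples $\sig_u\sig_v$ from the sign of a single edge on $\cL$, so that a \emph{single} barrier suffices rather than $\Theta(n)$ of them acting independently.
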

Thus, the above proves that all but an exponentially small amount of mass is concentrated on sets with size strictly bigger than $\norm{u-v}=n-1$. The following is a stronger quantitative conclusion. 

\begin{theorem}
\label{thm:mainunion}
In the same setup as in Theorem \ref{thm:main}, the following holds. There exists $\eps, c > 0$, independent of $n$, 
such that 
\[
	\sum_{|S| \leq (1 + \eps) n} \mu(S) \leq \exp(-cn).
\]

\end{theorem}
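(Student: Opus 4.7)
The plan is to decompose the spectral mass by the local profile of $E_\k$ at vertical cuts and combine a barrier-type per-cut suppression---a natural extension of what drives Theorem \ref{thm:main}---with an entropy-style union bound, exploiting that $|E_\k|$ close to its minimum $n-1$ forces most vertical cuts to carry a unique crossing. Concretely, for $i = 1, \ldots, n-1$, let $C_i$ denote the set of $n$ horizontal edges between columns $i$ and $i+1$, and set $N_i(\k) := |E_\k \cap C_i|$. The parity identity behind Proposition \ref{thm:chatterjee}, applied with $\tau \in \{-1,+1\}^V$ taken to be $-1$ on columns $\leq i$ and $+1$ otherwise, forces $\sum_{e \in C_i} k_e$ to be odd, and hence $N_i(\k) \geq 1$ on the support of $\mu$, so $|E_\k| \geq n - 1$. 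Call $i$ \emph{good} for $\k$ if $N_i(\k) = 1$ and \emph{bad} otherwise ($N_i \geq 3$). Since each bad cut contributes at least $2$ to $\sum_i (N_i - 1) \leq |E_\k| - (n-1) \leq \eps n + O(1)$, the set $G(\k) := \{i : N_i(\k) = 1\}$ satisfies $|G(\k)| \geq (1 - \eps/2) n - O(1)$ whenever $|E_\k| \leq (1+\eps) n$.

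The main step is to upgrade the proof of Theorem \ref{thm:main} to the following uniform per-cut decay: there exists $c > 0$ such that for every $I \subseteq \{1, \ldots, n-1\}$,
\begin{align*}
\sum_{\k \,:\, I \subseteq G(\k)} \alpha_\k^2 \leq e^{-c|I|}.
\end{align*}
Theorem \ref{thm:main} corresponds to the extremal instance, further restricted to $E_\k = \cL$ with all $n-1$ cuts good, heights locked at $n/2$, and no vertical edges. I would try to establish the general statement cut by cut: at each $i \in I$ the unique crossing edge $e_i^\star \in E_\k \cap C_i$ acts as a bottleneck, and conditioning on the remaining couplings reduces the $J_{e_i^\star}$-contribution to a one-variable Hermite projection of a step function, producing a contraction factor strictly below $1$. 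Since the bottleneck edges at distinct good cuts are disjoint, these contractions should combine multiplicatively across $I$ to yield the $e^{-c|I|}$ decay.

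With the extended bound, set $m := \lceil (1 - \eps/2) n \rceil - O(1)$. The inclusion $\{|E_\k| \leq (1+\eps)n\} \subseteq \{|G(\k)| \geq m\}$ together with the union bound $\mathbf{1}_{|G(\k)| \geq m} \leq \sum_{I : |I| = m} \mathbf{1}_{I \subseteq G(\k)}$ gives
\begin{align*}
\sum_{|S| \leq (1+\eps)n} \mu(S) \leq \binom{n-1}{m} \, e^{-cm} \leq e^{(H(\eps/2) - c(1 - \eps/2)) n + O(\log n)},
\end{align*}
where $H$ is the binary entropy function. Since $H(\eps/2) \to 0$ as $\eps \to 0^+$, choosing $\eps$ small enough that $H(\eps/2) < c(1-\eps/2)$ yields the claimed exponential decay.

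The main obstacle is the per-cut barrier bound. Theorem \ref{thm:main} exploits the maximal rigidity of $\cL$---every cut good, heights locked at $n/2$, no vertical edges. In the proposed generalization the heights of the singleton crossings at good cuts are unconstrained, non-$I$ cuts may carry extra crossings, and $E_\k$ may include arbitrary vertical edges. The delicate point is to ensure that the per-cut Hermite contraction survives uniformly under this added flexibility, and that contributions from distinct good cuts combine multiplicatively without the extra entropy in heights, bad-cut multiplicities, and verticals swamping the $e^{-c}$ per-cut gain; this is the heart of what separates Theorem \ref{thm:mainunion} from Theorem \ref{thm:main}.
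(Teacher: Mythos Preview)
Your high-level architecture---identify many ``rigid'' vertical sections forced by $|E_\k|\le(1+\eps)n$, extract an exponential suppression from them, then pay an entropy cost---is exactly the paper's. But the proof hinges entirely on the per-cut bound
\[
\sum_{\k:\,I\subseteq G(\k)}\alpha_\k^2\le e^{-c|I|},
\]
which you do not prove and which is not clearly true as stated. Two concrete problems. First, your suggested mechanism (a one-variable Hermite contraction at the unique crossing edge $e_i^\star$) does not give a uniform factor: conditionally on the other couplings, $\sigma_u\sigma_v$ as a function of $J_{e_i^\star}$ can be $\mathrm{sign}(J_{e_i^\star}-a)$, whose degree-$0$ Hermite mass is $(1-2\Phi(a))^2$, ranging over all of $[0,1)$; there is no uniform $e^{-c}$ to extract. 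Second, your bound must absorb the entropy of the \emph{heights} of the crossings (the location of each $e_i^\star$ in $C_i$), not just the choice of $I$: for $I=\{1,\dots,n-1\}$ the event $I\subseteq G(\k)$ contains all monotone-in-$x$ paths of arbitrary length, and nothing in your argument rules out that these carry mass of order $1$. (A minor point: parity forces $\sum_{e\in C_i}k_e$ odd, not $N_i$ odd, so ``bad'' means $N_i\ge 2$, not $N_i\ge 3$; this only changes $\eps/2$ to $\eps$ in the counting.)

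The paper sidesteps both issues by working not with single cuts but with columns of width $W=5$ (the width of the barrier gadget), and by grouping sets $S$ according to \emph{both} which columns are ``straight'' \emph{and} the $y$-coordinates of those straight segments. For each such group $\cG$ it builds an explicit cover $S_\cG$ consisting of the straight segments together with \emph{all} edges in the non-straight columns. This kills the height entropy (it is paid in the count of groups, bounded by a monotone-path estimate $\exp(O(n\eps\log(1/\eps)))$) and, crucially, leaves each straight column free of conditioned edges except for its central horizontal axis---exactly the setup needed to plant a barrier and rerun the cutset-flip argument of Theorem~\ref{thm:main} verbatim to get $\sum_{S\subset S_\cG}\mu(S)=\E\big[\E[\sigma_u\sigma_v\mid J_{S_\cG}]^2\big]\le e^{-cn}$. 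Your single-cut picture has no room for the width-$5$ barrier, and your grouping by $I$ alone leaves the heights unresolved; both are essential to making the reduction to Theorem~\ref{thm:main} go through.
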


As is apparent, the degree of the improvement over Proposition \ref{thm:chatterjee} is not dramatic.  However the proof strategy is based on certain geometric observations which we hope can be leveraged further in the future as well as be of broader interest. 

Before diving into the proofs let us make a few remarks.  
\begin{remark} The above statements consider the vertices $u = (1, n/2), v = (n, n/2)$ and not any pair of vertices for a reason. Namely, there is a unique shortest path between the points, i.e., the straight line. While, as the proof of Theorem \ref{thm:mainunion} will show, one may indeed take $u$ and $v$ to be vertically separated from each other by $\e n$ for some small constant $\e,$ the current argument does not allow us to take a large vertical separation, such as  $u = (1, n/2), v = (n, n)$, since the number of shortest paths between the two is at least $e^{cn}$ for some universal constant $c>0$ rendering a union bound of an estimate of the form in Theorem \ref{thm:main} useless. Finally, while, for simplicity, we have chosen to present the results in two dimensions, the arguments presented are not planar and work in any dimension yielding the same conclusions as is also the case in \cite{chatterjee2023spin}. 
\end{remark}

\begin{remark}\label{lb1}
Later (see Remark \ref{lowerbound}), we will show that a lower bound (up to a logarithmic term) matching the upper bound in Theorem \ref{thm:main} also holds. Thus, proving a version of Theorem \ref{thm:mainunion} where $(1+\e)$ can be replaced by a growing to infinity sequence $\e_n$ remains a very interesting open problem. Initiating the study of the Fourier spectrum in positive temperature models is another very interesting future direction. 
\end{remark}

\subsection{Acknowledgement}  SG thanks Mahan Mj for the invitation to contribute to the Proceedings of the International Colloquium, Tata Institute of Fundamental Research, 2024, which led to this article and Sourav Chatterjee for introducing him to this topic and helpful discussions. The authors also thank Vilas Winstein for useful comments. SG's research is partially funded by NSF Career grant-1945172.

\section{Proofs}
To set the context for the proofs, let us first review the proof of Proposition \ref{thm:chatterjee} from \cite{chatterjee2023spin}. 
\begin{proof}[Proof of Proposition \ref{thm:chatterjee}]
Consider the effect of fixing the couplings $\{J_e\}$ for all $e \in S \subseteq E$. We
will refer to this collection as $J_S$ for brevity. The conditional expectation of the relative spin is
\begin{align*}
	\E[\sig_u \sig_v \mid J_S] &= \E\Box{\sum_{\k \in \N^E} \al_\k h_\k(\{J_e\}) \cond J_S} \\
							   &= \sum_{\k \in \N^E} \al_\k \cdot \E\Box{h_\k(\{J_e\}) \cond J_S} \\
							   &= \sum_{\k \in \N^E} \al_\k \cdot \E[\prod_{e \in E} h_{k_e}(J_e) \cond J_S].
\end{align*}
However, due to the independence between the couplings of different edges, $\E[h_{k_e}(J_e) \mid J_S] = h_{k_e}(J_e)$ for all $e \in S$
and $= 0$ for all $e \notin S$. Thus, 
\begin{align*}
	\E[\sig_u \sig_v \mid J_S] &= \sum_{\k \sse S} \al_\k \cdot \prod_{e \in S} h_{k_e}(J_e),
\end{align*}
where $\k \sse S$ is used to mean that $k_e = 0$ for all $e \notin S$ (i.e., $E_\k \sse S$). Consequently,
\begin{align}
	\label{eq:sub}
	\E\Box{\E[\sig_u \sig_v \mid J_S]^2} = \sum_{\k \sse S} \al_\k^2,
\end{align}
using the orthonormality of the Hermite polynomials.  Therefore, to show Proposition \ref{thm:chatterjee}, 
it suffices to prove that if $S$ does not connect $u$ and $v$, then $\E[\sig_u \sig_v \mid J_S] = 0$. 

Now we introduce the key observation from \cite{chatterjee2023spin}. {Since $S$ does not connect $u$ and $v$, we may find a \emph{cutset} $C$
separating $u$ and $v$ (a collection of edges $C \sse E$ such that any path from $u$ to $v$ must contain some edge
$e \in C$) with the further property that $C$ is disjoint from $S$}. For instance $C$ could be taken to be the set of boundary edges of the connected component of $u$ induced be the edges in $S$. Consider the configuration of couplings $\{J'_e\}$ obtained by 
flipping the signs of all the edges along $C$, that is:
\[
	J'_e = \begin{cases}
	-J_e & e \in C \\
	J_e & e \notin C
	\end{cases}.
\]
It is not difficult to see that the ground state $\sig'$ for $\{J'_e\}$ can be obtained from $\sig$ simply by flipping the signs of all the spins on ``one side'' of the cutset, that is
\[
	\sig'_w = \begin{cases}
	-\sig_w, & w \in V_u(C) \\
	\sig_w, & w \in V_v(C)
	\end{cases},
\]
where the sides are $V_u(C) = \{w \in V : \text{there is a path from $u$ to $v$ that does not use any edge in $C$}\}$
and $V_v(C)$ (defined similarly). This choice of $\sig'$ preserves the energy along every edge, i.e., 
$J'_e \sig'_u \sig'_v = J_e \sig_u \sig_v$, and
reversing the roles of $J_e$ and $J'_e$ shows that $\sig'$ is indeed optimal.
Note that choosing $V_u(C)$ to be the flipped side was arbitrary, reflecting the aforementioned 
global sign flip symmetry. 

This, in particular, implies that even freezing the disorder on $S,$
the distribution of $\sig_u \sig_v$ is symmetric owing to the symmetry of the variables along $C$, none of which are frozen.
Hence
\begin{align}\label{fliparg}
	\E[\sig_u \sig_v \mid J_S] = 0,
\end{align}
concluding the proof of Proposition \ref{thm:chatterjee}.
\end{proof}

We next start with the proof of Theorem \ref{thm:main} to present the idea in the simplest form.

\begin{proof}[Proof of Theorem \ref{thm:main}]
Following the discussion preceding \eqref{eq:sub}, we will first show 
\begin{align}\label{expcond}
		\Abs{\E[\sig_u\sig_v | J_\cL]} \leq \exp(-cn), \quad \text{with probability at least } 1 - \exp(-cn),
	\end{align}
	for some $c > 0$. 
	This suffices since, owing to $|\sig_u\sig_v|=1,$ the above implies that  
	\begin{equation}\label{fouriermass}
	\E[\Abs{\E[\sig_u\sig_v | J_\cL]}^2]\le e^{-cn}
	\end{equation}
	 for some $c>0.$
		
	We will consider the set of blue edges as in Figure \ref{fig:setup} below.

	\begin{figure}[h]
		\centering
		\begin{tikzpicture}[scale=0.3]
			\tikzset{>=latex}
			\def\sty{densely dotted}
			\foreach \x in {1,...,16} {
				\foreach \y in {1,...,16} {
					\draw[\sty] (\x,\y) -- (\x+1,\y);
					\draw[\sty] (\x,\y) -- (\x,\y+1);
					\draw[\sty] (\x+1,\y) -- (\x+1,\y+1);
					\draw[\sty] (\x,\y+1) -- (\x+1,\y+1);
				}
			}

			\draw[red,very thick] (1,9) -- (17,9);
			\draw[red,fill=red] (1,9) circle (0.1);
			\draw[red,fill=red] (17,9) circle (0.1);
			\node at (1,9) [left] {$u$};
			\node at (17,9) [right] {$v$};

			\foreach \y in {1, ..., 17} {
				\draw[blue,very thick] (6,\y) -- (7,\y);
				\draw[blue, fill=blue] (6,\y) circle (0.1);
				\draw[blue, fill=blue] (7,\y) circle (0.1);
			}

			\node (a) at (20, 16) {$\cL$};
			\draw[->] (a) to (16, 9);

			\node (b) at (-1, 3) {$\es$};
			\draw[->] (b) to (6.5, 9);

		\end{tikzpicture}
		\caption{The setup for Theorem \ref{thm:main}. The blue edges form a ``vertical'' cutset separating $u$ and $v$.}
		\label{fig:setup}
	\end{figure}
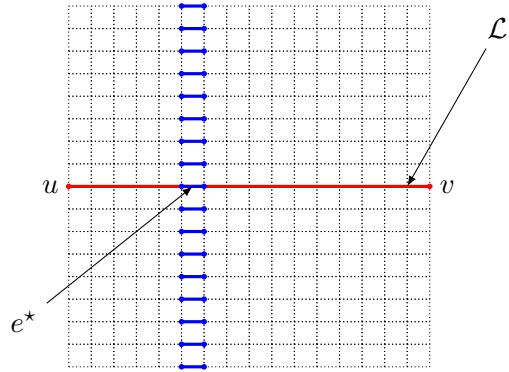

	Since the blue edges form a cut-set separating $u$ and $v$, flipping the signs of the couplings
	along these edges will flip the sign of the relative spin $\sig_u \sig_v$. However, since we condition
	on $J_\cL$, which freezes $J_\es$ for the blue edge $\es$ on $\cL$,  its sign cannot be flipped. This brings us to the following key new observation. With a positive probability (independent of $n$), the edge $\es$ has a ``local barrier'' around it which essentially decouples its status from the relative spin $\sigma_u \sigma_v$ and thereby we are not required to flip it anymore and flipping every other blue edge  suffices.  This will upper bound the conditional expectation by a constant away from $1.$ To make it exponentially small, we will argue that with exponentially small failure probability one can always locate a barrier along $\cL$ and consider a cutset passing through it. 

	\begin{figure}[ht]
		\centering
		\def\largespec{thick, blue}
		\begin{tikzpicture}[scale=0.6]
			\tikzset{>=latex}
			\foreach \x in {1, ..., 3} {
				\draw [dashed, red] (\x, 0) -- (\x + 1, 0);
				\draw [thick, blue] (\x, 1) -- (\x + 1, 1);
				\draw [thick, blue] (\x, -1) -- (\x + 1, -1);
			}
			\foreach \x in {1,..., 4} {
				\draw [dashed, red] (\x, 1) -- (\x, 2);
				\draw [dashed, red] (\x, -1) -- (\x, -2);
			}
			\foreach \y in {-1,...,1} {
				\draw [dashed, red] (0, \y) -- (1, \y);
				\draw [dashed, red] (4, \y) -- (5, \y);
			}

			\draw [thick, blue] (1, -1) -- (1, 1);
			\draw [thick, blue] (4, -1) -- (4, 0);
			\draw [blue, fill=blue] (4, 1) circle (0.08);
			\draw [dashed, red] (4, 0) -- (4, 1);

			\draw [dashed, red] (2, 0) -- (2, 1);
			\draw [dashed, red] (3, 0) -- (3, 1);
			\draw [dashed, red] (2, 0) -- (2, -1);
			\draw [dashed, red] (3, 0) -- (3, -1);

			\draw [very thick, black] (2, 0) -- (3, 0);
			\draw [fill=black] (2, 0) circle (0.08);
			\draw [fill=black] (3, 0) circle (0.08);

			\draw [dotted] (-5, 0) -- (0, 0);
			\draw [dotted] (5, 0) -- (10, 0);

			\node (a) at (2.5, 3) {$e$};
			\draw[->] (a) to (2.5, 0);
			\draw[fill=black] (-5, 0) circle (0.08) node[left] {$u$};
			\draw[fill=black] (10, 0) circle (0.08) node[right] {$v$};
		\end{tikzpicture}
		\caption{The barrier configuration around $e$. The couplings along all 
			the dashed red edges are $\leq 1$ in absolute value, and those along the
		solid blue edges are $> 100$ in absolute value. We refer to the horizontal axis containing $e$ as the \emph{central
	horizontal axis}. The top-right corner which will be a useful reference point in the arguments is made solid.}
		\label{fig:iso}
	\end{figure}
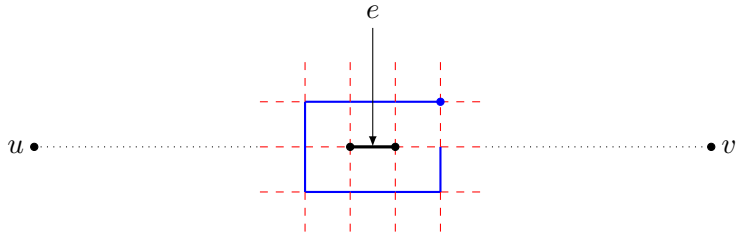
We first specify what a barrier configuration will be for us. Referring to Figure \ref{fig:iso}, the dashed red edges in the figure are assumed to have couplings of absolute value $\leq 1$ (let us call the collection of such edges as $\mathsf{Low}$),
	while the solid blue edges have couplings of absolute value $\geq 100$ (analogously let us denote this collection as $\mathsf{High}$). The key property of this configuration is
	the following. 
	Consider any $e \in \cL$ not too close to $u$ or $v$ (distance of $2$ to each end suffices). If the
	couplings are such that this barrier configuration $\mathsf{Barrier}(e)$ occurs surrounding $e$ (that is, the edges around $e$ satisfy
	the constraints of this configuration), then $\sig_u \sig_v$ is the same as that if the sign of the coupling $J_e$
	were flipped (to $-J_e$). In other words, the relative spin $\sig_u \sig_v$ is \emph{oblivious} to the sign of
	$J_e$ in this configuration. Further, the occurrence of $\mathsf{Barrier}(e)$ is measurable with respect to the 
	the \emph{absolute values}, and hence independent of the  \emph{signs} of the couplings occurring in the description of the event and hence also independent of all the remaining couplings including in particular $J_e$. The next lemma records this formally.

\begin{lemma}\label{barlem} For any $e$, such that the event $\mathsf{Barrier}(e)$ is defined (meaning that the all the edges participating in the event are at distance at least two from $u$ and $v$), its probability is uniformly bounded away from zero. If $\sigma$ is the ground state, then for any configuration as in Figure \ref{fig:iso}, irrespective of the value of $J_e$, it must be that for any $1\le i\le 9$, 
$$\sigma_{w_i}\sigma_{w_{i+1}}J_{(w_i,w_{i+1})}>0.$$
Here $w_1$ denotes the top-right corner as marked in the figure, and $w_{2}, \ldots, w_{10}$ denotes the remaining boundary vertices  traversed in the anti-clockwise direction. 
Thus, this determines $\sigma_{w_i}$ for all $1\le i\le 10$ and in particular $\sigma_u\sigma_v$ is independent of $J_e.$ 	Consequently, if one flips the sign of $J_e$, $\sig_u\sig_v$ remains unchanged.
\end{lemma}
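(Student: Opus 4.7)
My plan is to address the three substantive parts of the lemma---the probability bound, the blue-edge alignment, and the invariance of $\sig_u \sig_v$ under $J_e \mapsto -J_e$---in turn.

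The probability bound is straightforward: the event $\mathsf{Barrier}(e)$ imposes a bounded number of independent single-coupling constraints (nine blue edges with $|J|\geq 100$ and roughly twenty red edges with $|J|\leq 1$, as drawn in Figure \ref{fig:iso}). Each is a positive-probability event under the standard Gaussian law, and the couplings involved are independent, so $\P(\mathsf{Barrier}(e))$ is a positive constant that depends only on the fixed barrier geometry, not on $n$.

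For the blue-edge alignment I would argue by swap/contradiction. Suppose the ground state $\sig$ has some blue edge $(w_j, w_{j+1})$ with $\sig_{w_j}\sig_{w_{j+1}} J_{(w_j, w_{j+1})} < 0$. Since the nine blue edges form a simple path through $w_1, w_2, \ldots, w_{10}$, I can define an alternative configuration $\tilde\sig$ that agrees with $\sig$ off $\{w_1, \ldots, w_{10}\}$, and on this set takes the values obtained by fixing $\tilde\sig_{w_1}$ arbitrarily and propagating $\tilde\sig_{w_{i+1}} = \tilde\sig_{w_i} \cdot \sgn(J_{(w_i, w_{i+1})})$, so that every blue edge is aligned. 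The difference $H(\tilde\sig) - H(\sig)$ is supported on edges with at least one endpoint in $\{w_1, \ldots, w_{10}\}$: (i) the nine blue edges, which collectively gain at least $2 \cdot 100 = 200$ (realigning even a single previously misaligned blue edge of magnitude $\geq 100$ already produces this much); and (ii) the at most twenty-one red edges incident to some $w_i$---the cycle-closing edge $(w_{10}, w_1)$, the six red edges from the two inner vertices of the rectangle to the boundary, and the fourteen crossing edges leaving the rectangle---each of magnitude $\leq 1$, and hence each contributing at most $2$ in absolute value to the change. The net swing is thus at least $200 - 42 > 0$, contradicting the ground-state property of $\sig$. Crucially, $J_e$ enters only through the magnitude bound $|J_e| \leq 1$ that holds under $\mathsf{Barrier}(e)$, so the conclusion holds irrespective of $J_e$'s actual value.

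Given the alignment, the recursion $\sig_{w_{i+1}} = \sig_{w_i}\cdot\sgn(J_{(w_i,w_{i+1})})$ determines $\sig_{w_2}, \ldots, \sig_{w_{10}}$ from $\sig_{w_1}$ using only the blue couplings, so the entire boundary tuple is specified up to the single global sign flip. To conclude that $\sig_u \sig_v$ is unchanged by flipping $J_e$, I would note that once the ten boundary spins are fixed, the remaining optimization of the Hamiltonian decouples into an interior problem (over the two inner rectangle vertices, involving only the red edges inside the rectangle, which include $J_e$) and an exterior problem (over the exterior spins, involving only the exterior couplings and the crossing red edges with the fixed $w_i$-values as parameters). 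The exterior problem is independent of $J_e$, so the exterior spins---in particular $\sig_u$ and $\sig_v$---do not change when $J_e$ is flipped. The main obstacle is the energy comparison in the alignment argument: it succeeds only because the ratio between the high and low coupling thresholds ($100$ versus $1$) comfortably exceeds twice the maximum number of red edges incident to the boundary cycle, a feature that must be preserved if one wished to adapt the barrier to higher dimensions or other graphs.
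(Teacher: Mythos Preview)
Your argument is essentially the paper's: the same swap/contradiction for the blue-edge alignment (the paper gets $200-40$, you get $200-42$; the discrepancy is an immaterial edge-count), and the same domain-Markov decoupling for the final claim.

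One correction, though it does not break your proof. You write that ``$J_e$ enters only through the magnitude bound $|J_e|\le 1$ that holds under $\mathsf{Barrier}(e)$.'' In fact $e$ is the black central edge in Figure~\ref{fig:iso}, not a red edge, and $\mathsf{Barrier}(e)$ places \emph{no} constraint on $J_e$; this is exactly what the lemma means by ``irrespective of the value of $J_e$.'' Your own enumeration already reflects this: the twenty-one red edges you list are those incident to some $w_i$, and $e$, whose endpoints $(2,0)$ and $(3,0)$ are the two interior vertices, is not among them. So $J_e$ does not enter the energy comparison at all, and the alignment conclusion holds for arbitrary $J_e$, not merely $|J_e|\le 1$. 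Had your argument genuinely required $|J_e|\le 1$, the lemma as stated would not follow.
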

\begin{proof} Since the first claim is trivial owing to Gaussians being fully supported we focus on the latter claims. 

First note that owing to the global flip symmetry, we can assume $\sigma_{w_1}=1.$ Let us assume that $\sigma$ does not satisfy the  conclusion of the lemma and arrive at a contradiction. Let $\hat \sigma$ be the spin configuration which agrees with $\sigma$ off the vertices $w_i$ where $1\le i\le 10$ and define  $\hat \sigma_{w_i}$ inductively for $i$ going from $2$ to $10$ by the relation $\sigma_{w_{i-1}}\sigma_{w_{i}}= {\mathsf{sign}}\,{J_{(w_i,w_{i+1})}}$.
We will now show that $H(\hat \sigma)> H(\sigma)$ which finishes the proof of this claim. Since, by hypothesis, there exists at least one $1\le i\le 9$ such that $\sigma_{w_{i-1}}\sigma_{w_{i}}J_{(w_i,w_{i+1})}<0$, it follows that 
\begin{equation}\label{improvement}
H(\hat \sigma)-H(\sigma)\ge 2\min_{1\le i\le 9}|J_{(w_i,w_{i+1})}|-2\sum_{e\in {\mathsf{Low} }}|J_e|\ge 200-40=160. 
\end{equation}
The proof of the final claim is a consequence of the domain Markov property, i.e.,  fixing $\sigma_{w_{i}}$ for $1\le i\le 10$, decouples $\sigma$ outside the box and inside the box. Thus it follows that $\sigma_{u}$ and $\sigma_v$ (note that $\sigma_{w_1}$ is pinned to be $1$) are independent of $J_e$ which finishes the proof. 
\end{proof}

We now proceed with the remainder of the proof. 
	Denote by $|J|$ the collection of absolute values $|J_f|$ for all edges $f$ (not just those on $\cL$). 
Let us call the set of all such $|J| $ which have at least one  edge $e \in \cL$ such that the event $\mathsf{Barrier}(e)$ occurs as $\mathsf{Good}.$ Since $\mathsf{Barrier}(e)$ occurs with constant probability and there are linear in $n$ many such events supported on disjoint edges, and thereby independent, any of whose occurrences imply $\mathsf{Good}$ we have
\begin{equation}\label{expbound}
\P(\mathsf{Good}) \ge 1 - \exp(-cn),
\end{equation}
for some $c>0.$
	
	The upcoming Lemma \ref{condsym} shows 
	\begin{equation}\label{precdisp}
	\E[\sig_u\sig_v \cond J_\cL, |J| \,\,\,\text{such that}\,\,\, |J| \in \mathsf{Good}] = 0.
	\end{equation}  
This implies \eqref{expcond} and by \eqref{fouriermass} finishes the proof. 
To see why \eqref{expcond} follows from the above display,	
note that
	\begin{align}
	&\E[\sig_u\sig_v \cond J_\cL] =\\
	 &\P\left( |J| \in \mathsf{Good}\cond J_\cL\right)\E[\sig_u\sig_v \cond J_\cL, |J| \in \mathsf{Good}] + \P\left( |J| \notin \mathsf{Good}\cond J_\cL\right)\E[\sig_u\sig_v \cond J_\cL, |J| \notin \mathsf{Good}].
	\end{align}
	
	 The first term is zero by \eqref{precdisp} and the second term is bounded in absolute value by $\P(\mathsf{Good}^c \mid J_\cL )$. Markov's inequality and  \eqref{expbound} imply that $\P(\mathsf{Good}^c \mid J_\cL ) \le e^{-cn}$ with probability at least $1-e^{-cn}$ for some absolute constant $c>0$ which proves \eqref{expcond}.

Finally, \eqref{eq:sub} implies that $$\sum_{S \sse \cL}\mu(S)\le e^{-cn}$$ for some $c>0.$ Since any $S$ which is a strict subset of $\cL$ fails to connect $u$ and $v$,  by Proposition \ref{thm:chatterjee}, $$\sum_{S \sse \cL}\mu(S)=\mu(\cL).$$ This finishes the proof. 
	 \end{proof}

	\begin{lemma}\label{condsym}
	\begin{align}
		\label{eq:toshow}
		\E[\sig_u\sig_v \cond J_\cL, |J| \,\,\,\text{such that}\,\,\, |J| \in \mathsf{Good}] = 0.
	\end{align}
	\end{lemma}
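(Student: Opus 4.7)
The plan is to exhibit a measure-preserving involution $\phi$ of the conditional probability space (given $J_\cL$ and $|J|$ with $|J| \in \mathsf{Good}$) that sends $\sig_u\sig_v$ to $-\sig_u\sig_v$; by symmetry, the conditional expectation must then vanish. The construction of $\phi$ combines two ingredients: a rule, depending only on $|J|$, that picks out a distinguished barrier edge $\es \in \cL$ on the event $\mathsf{Good}$, together with the vertical cutset passing through $\es$, analogous to the blue cutset depicted in Figure \ref{fig:setup}.

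Concretely, I would let $\es = \es(|J|)$ be, say, the leftmost edge of $\cL$ for which $\mathsf{Barrier}(\es)$ holds; this is well-defined on $\mathsf{Good}$ and is a function of $|J|$ alone. Writing $\es = \{(k, n/2),(k+1, n/2)\}$, take $C = \{\{(k, j),(k+1, j)\} : 1 \le j \le n\}$, which is a cutset separating $u$ from $v$ and satisfies $C \cap \cL = \{\es\}$, and set $C' = C \setminus \{\es\}$. Define $\phi$ to flip the sign of $J_e$ for every $e \in C'$ and leave all other couplings fixed. Since $\phi$ preserves $|J|$, it preserves the choice of $\es$, the set $C'$, and is therefore a well-defined involution; since $C' \cap \cL = \nul$, it also preserves $J_\cL$. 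Conditional on $(|J|, J_\cL)$ the residual randomness lies in the independent $\pm 1$ signs of $\{J_e\}_{e \notin \cL}$, and $\phi$ acts on these as a deterministic bit flip on the coordinates in $C'$, so it is measure-preserving on the conditional space.

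The crux is showing that $\phi$ negates $\sig_u\sig_v$. The standard cutset-flip argument reviewed in the proof of Proposition \ref{thm:chatterjee} shows that flipping the signs of all couplings along the full cutset $C$ maps the ground state to one on which $\sig_u\sig_v$ is negated (by flipping spins on one side of $C$). Our $\phi$ performs this flip on $C'$ only, differing from the full cutset-flip by a single additional sign flip of $J_{\es}$. Since $\mathsf{Barrier}(\es)$ holds both before and after $\phi$ (being determined by $|J| = |\phi(J)|$), Lemma \ref{barlem} certifies that $\sig_u\sig_v$ is independent of the value, and in particular the sign, of $J_{\es}$ in either configuration; hence the two configurations produce the same value of $\sig_u\sig_v$, and $\phi$ itself already sends $\sig_u\sig_v$ to $-\sig_u\sig_v$. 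The main subtlety is ensuring $\es$ (and hence $\phi$) is measurable with respect to the conditioning, which is precisely why $\es$ must be chosen as a function of $|J|$ alone rather than of the full coupling configuration.
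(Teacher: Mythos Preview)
Your proposal is correct and follows essentially the same approach as the paper: pick a barrier edge $\es$ on $\cL$, consider the vertical cutset $C$ through it, and argue that flipping signs on $C\setminus\{\es\}$ negates $\sig_u\sig_v$ by combining the full cutset-flip from Proposition~\ref{thm:chatterjee} with Lemma~\ref{barlem} to absorb the missing flip at $\es$. If anything, you are slightly more careful than the paper in insisting that $\es$ be chosen by a deterministic rule (leftmost) depending only on $|J|$, which cleanly guarantees that the involution $\phi$ is measurable with respect to the conditioning.
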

	
	\begin{proof}
Suppose $\es$ is an edge on $\cL$ such that $\mathsf{Barrier}(\es)$ has occurred. Let us now reiterate the randomness that has been revealed due to the above conditioning. This includes $J_\cL$ and $|J_e|$ for all $e \notin \cL.$ Thus the variables $\{\mathsf{sign}(J_e): e\notin \cL\}$ are still independent uniform $\{\pm 1\}$ random variables. In particular, among the vertical cut-set $C$ of blue edges going through $\es$, as shown in Figure \ref{fig:setup}, while the absolute values of all of them are revealed, only the sign of $J_\es$ is frozen. However, by Lemma \ref{barlem}, on $\mathsf{Barrier}(\es),$ the sign of $\sig_u\sig_v$ does not depend on the value and hence the sign of $J_\es$.
This allows us to implement the cut-set flipping argument outlined before \eqref{fliparg} verbatim. More precisely, let $\hat J$ be the Gaussian configuration where 
\[
	\hat J_e = \begin{cases}
	-J_e & e \in C, e \neq \es \\
	J_e & e\,\,\, \text{otherwise}.
	\end{cases}.
\]
and $J'$ is the same as $\hat J$ except the sign of $\es$ is flipped too. Since $\mathsf{Barrier}(\es)$ occurs, $\sig_u\sig_v$ for $J'$ and $\hat J$ are the same. Further, by the argument  before \eqref{fliparg}, $\sig_u\sig_v$ for $J$ and $J'$ are of opposite signs and hence the same holds for $J$ and $\hat J$. 
Since the signs of all the edges $e \in C, e \neq \es$ conditioned on $\{J_\cL, |J|, |J| \in \mathsf{Good}\}$ are i.i.d. uniform $\{\pm1\}$, it follows that $\sig_u\sig_v$ is even conditionally uniformly distributed on $\{\pm1\}$ which finishes the proof. 
	\end{proof}
	
\begin{remark}\label{lowerbound} In this remark, as indicated in Remark \ref{lb1}, we show how one may also obtain a lower bound counterpart to Theorem \ref{thm:main}.
Consider the set $\widehat \cL$ given by $\cL$ and all the edges which share a vertex with one of the edges in $\cL.$ 
An argument similar to \eqref{improvement} can be employed to prove that $|\E(\sigma_{u}\sigma_v \mid J_{\widehat \cL})|=1$ with probability at least $e^{-cn\log n}$.  Consider the event that $J_{e}>100$ for all $e\in \cL$ and $|J_{e}|<\frac{1}{n}$ for all $e\in \widehat \cL\setminus \cL$. On this event, for all the vertices $w$ including $v,$ incident on edges $e\in \cL$, it must be must be the case, provided that $\sigma_u$ is pinned to be $1,$ that $\sigma_w=1$ and hence in particular $\sigma_u\sigma_v=1.$ To see this, note that any configuration $\sigma$ such that $\sigma_u=1$ but $\sigma_w\neq 1 $ for some $w$ as above has strictly smaller energy than $\hat \sigma$ which agrees with $\sigma$ off $\cL$ but is equal to $1$ for all such $w$. The difference in energy being at least $100- 2\cdot n\cdot \frac{1}{n}$. The $100$ term appears because of the energy change from edges in $\cL$ whereas the $2\cdot n\cdot \frac{1}{n}$ term is due to the edges adjacent to $\cL$, (there are at most $2n$ such edges in two dimensions and the constants have to be modified accordingly in higher dimensions).  Now the probability of the above event is at least $e^{-cn\log n}$ and hence this, by \eqref{eq:sub}, implies that \begin{align}\label{expcond34534}
		\sum_{\cL \subset S\subset \widehat \cL}\mu (S)\ge e^{-cn\log n},
	\end{align}
	for some $c > 0$. Since the number of such subsets $S$ is $e^{cn}$, it follows that there must be a set $S$ such that $\mu(S)\ge e^{-cn\log n}$ and $\cL \subset S \subset \widehat \cL$. Above the constant $c$ is always positive but its value is allowed to change from line to line. 
\end{remark}

\begin{remark}\label{higdimbar}The reader may notice that while we had claimed that our arguments are not planar, it might not be completely obvious what the analogous barrier construction is in general dimensions. It turns out that the right generalization is the following. Consider a box centered at $e$ of side lengths, say $3$ in the direction of $e$ and $2$ in the other $d-1$ directions. Now take a spanning tree of the surface of the box (which is connected) rooted at the one of the corners and force the edges of the tree to have high values and hence color them blue, whereas every other edge adjacent to the blue edges are red edges and have low values. This by an argument as in \eqref{improvement} allows one to conclude that for the ground state $\sigma,$ the spins $\sigma_u$ of all the vertices on the surface of the box are determined iteratively by exploring the spanning tree from the root ensuring that $\sigma_u\sigma_v J_{(u,v)}>0$ for every edge $(u,v)$ in the tree.  
\end{remark}

	We will next prove Theorem \ref{thm:mainunion} by upgrading the proof of Theorem \ref{thm:main}. This is based on the observation  that the proof of Theorem \ref{thm:main} does not truly require the conditioned set $S$
	to be $\cL$ and small deviations from it may be tolerated. The sets connecting $u$ and $v$ having size at most $(1+\e)n$ must have long stretches of horizontal segments allowing us to implement an argument like the one for Theorem \ref{thm:main}  as well have small entropy or total count permitting a union bound to conclude that the total spectral mass across all of them is small.

\def\st{\mathsf{straight}}
	\def\env{\mathsf{envelope}}
\begin{proof}[Proof of Theorem \ref{thm:mainunion}]
	The theorem is an immediate consequence of the following lemma by a union bound and observing that $\e\log(\frac{1+\e}{\e})$ can be made arbitrarily small by choosing $\e$ small enough. 
\end{proof}	
	
	  \newpage
	
	\begin{lemma}\label{key}There exists $c,c_2>0$ such that for all small $\e>0$ there exists a collection $\mathscr{C}$ of at most $\Exp{n
																				   \cdot \eps \cdot \Rnd{\log \F{1 
																					 }{\eps}
																			   + c_2}} $ sets $C$ such that the following holds.
																			 
												$1.$ For any $S$ of size at most $(1+\e)n$ which connects $u$ and $v$ there exist a $C\in \mathscr{C}$ such that $S\subset C.$ 	\\\\
$2.$ $\displaystyle{
	\sum_{S \subset C} \mu(S) \leq \exp(-cn),}$				\end{lemma}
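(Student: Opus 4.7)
The plan is to replicate the barrier-based argument from Theorem~\ref{thm:main}, but now applied to each envelope $C\in\mathscr{C}$ indexed by a short lattice path from $u$ to $v$, and to combine the envelopes via a union bound. By \eqref{eq:sub}, for each $C$ one has $\sum_{S\subseteq C}\mu(S)=\E\Box{\E[\sigma_u\sigma_v\mid J_C]^2}$, so conclusion~(2) reduces to showing that after conditioning on $J_C$ enough randomness remains in $E\setminus C$ for the flipping step of Lemma~\ref{condsym} to execute at many disjoint barrier locations. For the counting step, any $S$ with $|S|\le (1+\eps)n$ connecting $u$ and $v$ contains a shortest path $P_S$ in $(V,S)$ with $|P_S|\le (1+\eps)n$; since $\norm{u-v}=n-1$, viewing $P_S$ as a lattice walk in $\{R,L,U,D\}^*$ forces at most $\eps n+O(1)$ non-$R$ steps. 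A union bound over step positions and labels then produces
\[
\binom{(1+\eps)n}{\eps n+O(1)}\cdot 3^{\eps n+O(1)} \le \exp\!\bigl(n\eps(\log(1/\eps)+c_2)\bigr)
\]
such lattice paths, matching the target cardinality of $\mathscr{C}$.

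\textbf{Envelope and barrier argument.} For each such lattice path $P$, I would take $C(P)$ to be $P$ together with a constant-size local buffer of neighboring edges around each non-$R$ step, so that $|C(P)|=(1+O(\eps))n$, every small $S$ with canonical shortest path $P$ satisfies $S\subseteq C(P)$, and at all but $O(\eps n)$ columns $i\in\{1,\dots,n-1\}$ the envelope meets the vertical cut-set $E_i$ in a single horizontal edge $e_i^\star$ lying on the local axis of $P$. With $C(P)$ in hand the argument mirrors the proof of Theorem~\ref{thm:main}: condition on $J_{C(P)}$; at each of the $(1-O(\eps))n$ ``clean'' columns $i$ where $C(P)\cap E_i=\{e_i^\star\}$, Lemma~\ref{barlem} makes $\mathsf{Barrier}(e_i^\star)$ a constant-probability event, and one can choose $\Omega(n)$ such columns whose barrier boxes are pairwise disjoint. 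A union bound then gives that at least one such barrier occurs with probability $\ge 1-e^{-cn}$; on this event the flipping step of Lemma~\ref{condsym} applies verbatim, yielding $\E[\sigma_u\sigma_v\mid J_{C(P)},|J|]=0$ and hence $\sum_{S\subseteq C(P)}\mu(S)\le e^{-cn}$.

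\textbf{Main obstacle.} The chief difficulty is pinning down the envelope construction: $C(P)$ must simultaneously be thin enough to preserve $\Omega(n)$ clean columns (so the per-envelope exponential decay beats the cardinality of $\mathscr{C}$ in the final union bound) and generous enough to cover every small $S$ whose canonical shortest path is $P$, including branching edges and possibly disconnected components of $S\setminus P_S$. Disconnected extras of $S$ can in principle sit far from $P$, so a naive enlargement risks either blowing up $|C(P)|$ or obstructing too many cut-sets; one likely needs either a separate treatment of such extras (for instance by showing that their spectral contribution is already $e^{-cn}$-small) or a refined enumeration that takes a small additional union over ``extras patterns'' per path, while keeping the total count within the target $\exp(n\eps(\log(1/\eps)+c_2))$.
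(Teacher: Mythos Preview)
Your overall architecture is right: enumerate a small family of envelopes, show each envelope leaves $\Omega(n)$ clean columns for the barrier argument, then union bound. The path count and the barrier/flip step are essentially the paper's. But the gap you yourself flag in the ``main obstacle'' paragraph is real and your proposed envelope does not close it. If $C(P)$ is just $P$ together with a constant buffer around non-$R$ steps, then an $S$ with $|S|\le(1+\eps)n$ can have stray edges arbitrarily far from $P$ (a disconnected component of $S$ sitting anywhere in the grid, or a long appendage off the connecting path), and you will \emph{not} have $S\subseteq C(P)$. Neither of your suggested fixes is how the paper proceeds: there is no separate spectral estimate for ``extras'', and no secondary union over extras patterns.

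The missing idea is to make the envelope \emph{much} coarser in the bad regions instead of trying to track $S$ there. Partition the grid into $n/W$ vertical columns of width $W$ (with $W$ the barrier width). If $|S|\le(1+\eps)n$ and $S$ connects $u,v$, then in all but at most $\eps W\cdot n/W$ columns one has $|I_j\cap S|=W$ exactly, which forces $I_j\cap S$ to be a single straight horizontal segment. Classify $S$ by the set $J$ of such ``straight'' columns together with the heights $\{y_j\}_{j\in J}$ of those segments; this gives the same $\exp(n\eps(\log(1/\eps)+O(1)))$ count you computed. For each class, take the envelope $C$ to be the union of the straight segments over $j\in J$ \emph{together with every edge in the remaining columns} (and the vertical edges on the boundaries of the straight columns). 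Then $S\subseteq C$ is automatic: in straight columns $S$ equals the segment by definition, and in non-straight columns $C$ contains everything. Since $|J|\ge(1-\eps W)n/W$, there are $\Omega(n)$ straight columns on which $C$ meets the vertical cutset in a single edge, and the barrier argument from Theorem~\ref{thm:main} goes through verbatim.
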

	
	The lemma has two parts. The first part is a combinatorial or counting statement. The second part essentially follows by the argument for Theorem \ref{thm:main}.  We begin with the first part. 
	\begin{proof}[Proof of 1.]
	Consider any $S$ as in the statement of the lemma. 
	Let $W = 5$ (the width of the barrier in
	Figure \ref{fig:iso}),
	and suppose, for simplicity, that $W$ divides $n$ (a simple modification of this argument will work otherwise).
	Partition the grid into $n/W$ columns of width $W$, and let $I_j$ be the $j$-th column, i.e. the set of all edges whose at least one endpoint has $x$-coordinate in the semi-closed interval $[jW,(j+1)W)$. Note that since $S$ is a subset of edges that 
	connects $u$ and $v$, we must have that $|I_j \cap S|$ contains a connected set of size at least $W$ for each $j$. However, for small enough $\e,$ on account of the constraint $|S| \leq (1 + \eps) n$, there also must be linearly many of the columns $I_j$, say at least $50\%$ of them, where $|I_j \cap S| = W$, that is, $I_j
	\cap S$ is a ``straight sequence of edges'' of length $W$ (We call such columns ``straight'', and call their collection $\st(S)$. Here $j$ and $I_j$ may be identified and consequently we may say either $j  \in
	\st(S)$ or $I_j \in \st(S)$; see Figure \ref{fig:union1} for an illustration.).
	To see this suppose that only  $\t$-fraction of
	these columns are straight. Then, the remaining $(1 - \t)$ fraction of them satisfy $|I_j \cap 
	S| \geq W + 1$, resulting in a total edge-count of at least 
	\[
		\t \cdot \f{n}{W} \cdot W + (1 - \t) \f{n}{W} \cdot (W + 1) = n \cdot \Rnd{1 + \f{1 - \t}{W}}.
	\]
	This must be $\leq (1 + \eps) n$, so we must have
	\[
		\t \geq 1 - \eps W.
	\]
We will now classify the different $S$s according to their straight parts, that is, two sets $S,
	S'$ fall in the same class $\cG$ if $\st(S) = \st(S')$, and for each $j \in
	\st(S)$, $I_j \cap S = I_j \cap S'$. We will identify $\cG$ with the
	collection $J$ of indices $j$ such that $I_j$ is straight, and the collection $Y = \{y_j : j \in J\}$ of the
	$y$-coordinates of the edges in $I_j \cap S$.

	\begin{figure}[ht]
		\centering
		\begin{tikzpicture}[scale=0.4]
			\tikzset{>=latex}

			\foreach \y in {0,...,6} {
				\draw[blue, dashed] (\y*3, 0) -- (\y*3, 18);
			}

			\draw[dotted, pattern=north west lines, pattern color=orange] (6, 0) rectangle ++(3, 18);
			\draw[dotted, pattern=north west lines, pattern color=orange] (12, 0) rectangle ++(3, 18);

			\draw[red,fill=red] (0,9) circle (0.1);
			\draw[red,fill=red] (18,9) circle (0.1);
			\node at (0,9) [left] {$u$};
			\node at (18,9) [right] {$v$};

			\draw[red,very thick] (0, 9) -- (2, 9) -- (2, 10) -- (6, 10) -- (6, 8) --
			(6, 7) -- (5, 7) -- (5, 6) -- (10, 6) -- (10, 10) -- (11, 10) --
			(16, 10) -- (16, 9) -- (18, 9);

			\draw[red,very thick] (1, 9) -- (1,11) -- (3, 11);
			\draw[red,very thick] (16, 7) -- (17, 7) -- (17, 6);

			\foreach \x in {1,...,6} {
				\node at (\x*3 - 1.5, 5) [below] {$I_\x$};
			}

			\draw[dotted] (16, 10) -- (18, 10) node[right] {$y_5$};
			\draw[dotted] (0, 6) node[left] {$y_3$} -- (5, 6);
		\end{tikzpicture}
		\caption{Quantities involved in the proof of  Lemma \ref{key}. The solid red edges form the set $S$.
			The columns $I_3$ and $I_5$ (shaded in orange) are ``straight'' (note that $I_3$ is straight even though its
			left-boundary has some edges in $S$). The group $\cG$ for this set is defined by $J = \{3, 5\}$ and $Y = \{y_3,
			y_5\}$ as shown in the figure. Note that $S$ is not stipulated to itself be connected and hence may contain additional edges outside the path connecting $u$ and
			$v$.}
		\label{fig:union1}
	\end{figure}
Since there are at least $(1 - \eps W) \cdot \f{n}{W}$ straight
	segments out of a total of $\f{n}{W}$, the number of ways to choose $J$ is at most
	\begin{align}
		\sum_{m \geq (1 - \eps W) \cdot n/W} \binom{n/W}{m}  \le  \Exp{n \cdot \eps \cdot \Rnd{\log \eps^{-1} + c_1}}. \label{eq:choosej}
	\end{align}
The above bound follows from the standard Chernoff bound for Binomials and standard asymptotics (as $\e \to 0$) of the entropy functional appearing in the Chernoff bound. 

	Now let us fix the set $J$ and consider the number of ways of choosing $Y$. Note that any path connecting $u$ and $v$ 
	in $S$ must contain every straight segment $I_j \cap S$ for $j \in J$. Thus a crude upper bound on the number of ways to choose $Y$ is the number of ways to connect $u$ and $v$ using at most
	$(1 + \eps) n$ edges since, given the set $J$, such a path exactly recovers the straight segments we are seeking. 
	
	The path counting problem is straightforward. There are at most $(1+\e)n$ steps. At least $n$ of them must be of the form $\{(x, y), (x + 1, y)\}.$ The remaining edges can be of the three remaining types, i.e,., going left, north or south. 

	 Therefore, the overall count is at most
	\begin{align}
		\sum_{m \leq (1 + \eps) n} \sum_{r \leq \eps n} \binom{m}{r} \cdot 3^r &\leq \eps(1 + \eps) n^2 \cdot \binom{(1
		+ \eps) n}{\eps n} \cdot 3^{\eps n} \\
																			   &\leq \eps(1 + \eps) n^2 \cdot \Exp{n
																				   \cdot \eps \cdot \Rnd{\log \F{1 +
																					   \eps}{\eps}
																			   + c_2}} \label{eq:choosey},
	\end{align}
	where $c_2$ is another absolute constant. Putting together equations \eqref{eq:choosej} and \eqref{eq:choosey}, 
	the total number of choices of $\cG$ is at
	most
	\begin{equation}
		\label{eq:chooseg}  \Exp{3 n \cdot \eps \cdot \Rnd{\log \eps^{-1} + c_3}},
	\end{equation}
	for all $\eps$ small enough, where $c_3$ is universal. 

	\begin{figure}[ht]
		\centering
		\begin{tikzpicture}[scale=0.4]
			\tikzset{>=latex}

			\foreach \y in {0,...,6} {
				\draw[blue, dashed] (\y*3, 0) -- (\y*3, 18);
			}

			\draw[red,fill=red] (0,9) circle (0.1);
			\draw[red,fill=red] (18,9) circle (0.1);
			\node at (0,9) [left] {$u$};
			\node at (18,9) [right] {$v$};

			\foreach \x in {1,...,6} {
				\node at (\x*3 - 1.5, 5.2) [below] {$I_\x$};
			}

			\draw[red, very thick] (6, 6) -- (9, 6);
			\draw[red, very thick] (12, 10) -- (15, 10);

			\foreach \j in {0, 1, 3, 5} {
				\foreach[evaluate] \x in {\j*3, \j*3 + 1, \j*3 + 2} {
					\foreach \y in {0,...,17} {
						\draw[red, very thick] (\x, \y) -- (\x + 1, \y);
						\draw[red, very thick] (\x, \y) -- (\x, \y+1);
						\draw[red, very thick] (\x, \y + 1) -- (\x + 1, \y + 1);
					}
				}
				\draw[red, very thick] (\j*3, 0) -- (\j*3, 18);
				\draw[red, very thick] (\j*3 + 3, 0) -- (\j*3 + 3, 18);
			}

			\draw[dotted] (15, 10) -- (18, 10) node[right] {$y_5$};
			\draw[dotted] (0, 6) node[left] {$y_3$} -- (6, 6);
		\end{tikzpicture}
		\caption{Illustration of $S_{\cG}$ where $\cG$ corresponds to the one in Figure \ref{fig:union1}. The edges in  $S_{\cG}$ are shown in red.
		Note that the boundaries of the columns $I_3$ and $I_5$ are also included in $S_{\cG}$.}
		\label{fig:union2}
	\end{figure}
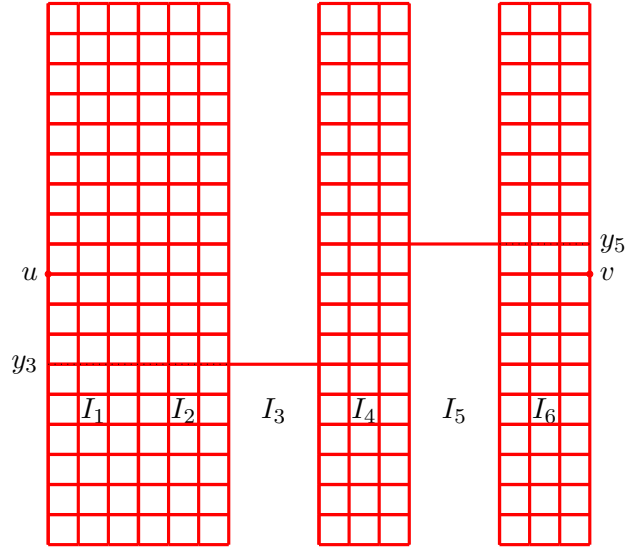
For each $\cG,$ consider the set $S_{\cG}$ given by the union of the straight segments present in any set $S \in \cG$ (this is well defined by the definition of $\cG$) and all the edges present in every remaining column as well as all the vertical edges on the boundaries of the straight columns. See for instance Figure \ref{fig:union2}. 
Since for any $S\in \cG$  we have $S \subset S_\cG,$ taking $C=S_{\cG}$ and the collection $\mathscr{C}$ to be $\{S_{\cG}\}_{\cG}$ finishes the proof of the first part.
\end{proof}

	\begin{proof}[Proof of 2.]
	The argument now proceeds in the same way as for Theorem
	\ref{thm:main} and it suffices to show that 	for any $\cG$ as in the proof of 1.,  with probability at least $1 - \Exp{-c_5 n}$,
		\[
		\Abs{\E\Box{\sig_u \sig_v \cond J_{S_\cG}}} \leq \Exp{-c_5 n},
	\]
	for some constant $c_5>0$ for all small enough $\e>0.$

 By choosing $\eps < \f{1}{2W}$, one may ensure that at least half of
	the columns $I_j$ are straight. Each such segment is the central horizontal axis of a barrier configuration with constant probability independently across each segment. On the event that a barrier occurs (let us call it $\sf{Good}$ as before), conditioning on $J_{S_{\cG}}$ and $|J|$ allows flipping  the signs of the variables along a vertical cutset passing through the middle of the straight segment witnessing the barrier configuration, showing that $$\E[\sig_u \sig_v \cond J_{S_{\cG}}, |J|, \sf{Good}] = 0.$$
	This finishes the proof.  
\end{proof}

\begin{remark}\label{highenv}
We conclude by remarking that the higher dimensional version of the above argument is essentially the same where instead of columns we consider slabs of width $W$, i.e., the set of all points whose $x$ coordinate lands in an interval of the type $[jW, (j+1)W).$
\end{remark}

\appendix

\bibliographystyle{alpha}
\bibliography{ref}

\end{document}